\title{Quasi-arithmetic means ad libitum}
   \def\MR#1{}
\newtheorem{thm}{Theorem}[section]
\newtheorem{cor}[thm]{Corollary}
\newtheorem{lem}[thm]{Lemma}
\theoremstyle{definition} 
\let\olddefi\defi
\renewcommand{\defi}{\olddefi\normalfont}
\let\oldquestion\question
\renewcommand{\question}{\oldquestion\normalfont}
\let\oldexample\example
\renewcommand{\example}{\oldexample\normalfont}
\newtheorem{rmk}[thm]{Remark}
\let\oldrmk\rmk
\renewcommand{\rmk}{\oldrmk\normalfont}
\newtheorem{claim}{\textsc{Claim}}
\providecommand{\MR}[1]{}
\providecommand{\MR}{\relax\ifhmode\unskip\space\fi MR }
\providecommand{\href}[2]{#2}
\subjclass{Primary: 11B05, 26B25. Secondary: 52B55, 03D25.}
\begin{document}

\author[P.~Leonetti]{Paolo Leonetti}
\address
{Department of Economics, Universit\`a degli Studi dell'Insubria, via Monte Generoso 71, Varese 21100, Italy}
\email{paolo.leonetti@uninsubria.it}
\urladdr{\url{https://sites.google.com/site/leonettipaolo/}} 

\keywords{Quasi-arithmetic mean; convexity; quasi-convexity; Minkowsky sum.}

\begin{abstract} 
\noindent 
Let $\alpha_1, \ldots, \alpha_m$ be two or more positive reals with sum $1$, let $C\subseteq \mathbb{R}^k$ be an open convex set, and $f: C\to \mathbb{R}^k$ be a continuous injection with convex image. 
For each nonempty set $S\subseteq C$, let $\mathscr{M}(S)$ be the family of quasi-arithmetic means of all $m$-tuples of vectors in $C$ with respect to 
$f$ and the weights $\alpha_1,\ldots,\alpha_m$, that is, the family  
$$
\mathscr{M}(S)= \left\{ f^{-1}\left(\alpha_1f(x_1)+\cdots+\alpha_mf(x_m)\right): x_1,\ldots,x_m \in S \right\}.
$$
We provide a simple necessary and sufficient condition on $S$ for which the infinite iteration $\bigcup_{n}\mathscr{M}^n(S)$ is relatively dense in the convex hull of $S$. 
\end{abstract}
\maketitle
\thispagestyle{empty}

\section{Introduction}\label{sec:int}

Let $C$ be a nonempty convex subset of a real vector space $X$ and let $f: C\to X$ be an injective map with convex image.  
Let also $\alpha_1,\ldots,\alpha_m$ be two or more positive reals with sum $1$. Then the \emph{quasi-arithmetic mean} of $x_1,\ldots,x_m\in C$ with respect to the function $f$ and the weights $\alpha_1,\ldots,\alpha_m$ is the \textquotedblleft mean vector\textquotedblright
$$
\mathsf{m}(x_1,\ldots,x_m):=
f^{-1}\left(\alpha_1f(x_1)+\cdots+\alpha_mf(x_m)\right),
$$ 
so that $\mathsf{m}$ is a map $C^m \to C$.

Quasi-arithmetic means, which are commonly known also as quasi-linear means, provide a generalization of classical and weighted means (say, the arithmetic mean, the quadratic mean, the harmonic mean, and others). 
They were first considered by Kolmogorov \cite{Kol30}, Nagumo \cite{Nag30}, de Finetti \cite{deFin31}, and Kitagawa \cite{Kit34}, and have been proved to be useful in a large variety of contexts, 
see e.g. 
\cite[Section 7.3 and Chapters 15, 17, and 20]{MR1004465} and \cite[Chapter 5]{MR4274074}. 
Their lattice structure has been recently studied in \cite{MR4057519}. 
In addition, the investigation of functional equations involving quasi-arithmetic means dates back at least to Aumann \cite{MR1581521} and it has been the subject of intense research for about eighty years, see, e.g., the textbook expositions \cite[Chapter III]{MR0046395} and \cite[Chapters 15 and 17]{MR1004465}, the articles 
\cite{MR2176021, 
Glaz2023, 
MR3720973, 
MR1487253, 
MR3542948, 
MR2154023, MR2217857}, 
and references therein; in turn, the latter works revealed connections with the study of fixed point theory, permutable mappings, and dynamical systems.

For each nonempty $S\subseteq C$, we write 
$\mathscr{M}(S)$ 
for the set of all quasi-arithmetic means of vectors in $S$ with respect to $f$ and $\alpha_1,\ldots,\alpha_m$, that is, 
$$
\mathscr{M}(S):=\left\{\mathsf{m}(x_1,\ldots,x_m): x_1,\ldots,x_m \in S\right\}. 
$$
Considering that $\mathsf{m}(x,\ldots,x)=x$ for all $x \in C$, $\mathscr{M}(\cdot)$ is a monotone increasing operator (i.e., $S\subseteq \mathscr{M}(S)$ for all $S\subseteq C$), so it makes sense to define
$$
\mathscr{M}^\omega(S):=\bigcup_{n\ge 0}\mathscr{M}^n(S),
$$
where $\mathscr{M}^0(S):=S$ and $\mathscr{M}^{n+1}(S):=\mathscr{M}(\mathscr{M}^n(S))$ for all integers $n\ge 0$ and sets $S\subseteq C$.  
Of course, each $\mathscr{M}^n(S)$ is a subfamily of finite convex linear combinations of elements of $S$, hence the limit $\mathscr{M}^\omega(S)$ is contained in the convex hull of $S$, hereafter denoted by $\mathrm{conv}(S)$. 

\begin{rmk}\label{rmk:Momega}
Note that $\mathscr{M}^\omega(S)$ is the smallest set $W\subseteq C$ containing $S$ such that $\mathscr{M}(W)=W$. 
For, pick $x_1,\ldots,x_m \in \mathscr{M}^\omega(S)$. Then there exists an integer $n\ge 0$ such that $x_1,\ldots,x_m \in \mathscr{M}^n(S)$. Hence the quasi-arithmetic mean $\mathrm{m}(x_1,\ldots,x_m)$ belongs to $\mathscr{M}^{n+1}(S)$, with the conclusion that $\mathscr{M}(\mathscr{M}^{\omega}(S))=\mathscr{M}^\omega(S)$. 
\end{rmk}

As a related notion, in the case where $f$ is the identity function $\mathrm{id}$, following Mesikepp \cite{MR3508484}, the set $\mathscr{M}(S)$ can be regarded as the Minkowski $M$-sum 
$$
\oplus_M (S,\ldots,S):=\{a_1x_1+\cdots+a_mx_m: x_1,\ldots,x_m \in S, (a_1,\ldots,a_m) \in M\},
$$
where $M$ is the singleton $\{(\alpha_1,\ldots,\alpha_m)\}$; cf. also \cite{MR3581298, MR3120744}. 

\emph{Here and after, we suppose that $X=\mathbb{R}^k$, for some integer $k\ge 1$, and that $C$ is a nonempty open convex set.}  
Let us also denote by $\mathrm{int}(S)$ and $\mathrm{cl}(S)$ the interior and the closure of a subset $S\subseteq \mathbb{R}^k$ in the relative topology of $C$, respectively. A subset $S\subseteq \mathbb{R}^k$ is said to be $k$-dimensional whenever it does not lie in an affine $(k-1)$-dimensional hyperplane, or, equivalently, $\mathrm{int}(\mathrm{conv}(S))$ is nonempty. Every subset of $\mathbb{R}^k$ is endowed with its relative topology. Hence, given subsets $A, B\subseteq \mathbb{R}^k$, we say that $A$ is relatively dense in $B$ if the closure of $A\cap B$ in the relative topology of $B$ coincides with $B$. Lastly, let $|S|$ denote the cardinality of a set $S$. 

In the case where $f$ is the identity function and $C=\mathbb{R}^k$, the sets $\mathscr{M}^\omega(S)$ have been studied by Green and Gustin in \cite{MR42142}. 
Remarkably, they proved in \cite[Theorem 2.2]{MR42142} the following density result.
\begin{thm}\label{thm:originalgreengustin}
Suppose that $f$ is the identity function $\mathrm{id}$ on $\mathbb{R}^k$ and fix two or more positive weights $\alpha_1,\ldots,\alpha_m$ with sum $1$. Then $\mathscr{M}^\omega(S)$ is relatively dense in $\mathrm{conv}(S)$ for every set $S\subseteq \mathbb{R}^k$. 
\end{thm}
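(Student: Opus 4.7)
The plan is a two-step reduction followed by a combinatorial partitioning argument. First, since $\mathrm{conv}(S)$ equals the union of $\mathrm{conv}(S')$ over finite $S' \subseteq S$, and $\mathscr{M}^\omega$ is monotone, it suffices to handle the case $S = \{x_0, \ldots, x_N\}$ finite. Let $\Delta^N \subseteq \mathbb{R}^{N+1}$ denote the standard simplex with vertices $e_0, \ldots, e_N$, and consider the affine surjection $\phi : \Delta^N \to \mathrm{conv}(S)$, $\phi(\lambda) = \sum_i \lambda_i x_i$. A direct check shows $\phi$ commutes with the weighted mean $\mathsf{m}$, so $\phi(\mathscr{M}^n(\{e_0, \ldots, e_N\})) = \mathscr{M}^n(S)$ for every $n$; by continuity of $\phi$, density of $\mathscr{M}^\omega(\{e_0, \ldots, e_N\})$ in $\Delta^N$ implies density of $\mathscr{M}^\omega(S)$ in $\mathrm{conv}(S)$. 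Hence it suffices to prove the theorem in the simplex case.

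Second, an easy induction on $n$ yields the explicit description
\[
\mathscr{M}^n(\{e_0, \ldots, e_N\}) = \Bigl\{\sum_{I \in [m]^n} w_I\, e_{\iota(I)} : \iota : [m]^n \to \{0, \ldots, N\}\Bigr\},
\]
where $w_{(i_1, \ldots, i_n)} := \alpha_{i_1} \cdots \alpha_{i_n}$, $\sum_I w_I = 1$, and $\max_I w_I = \beta^n$ with $\beta := \max_i \alpha_i < 1$ (the strict inequality uses $m \geq 2$ and positivity of the $\alpha_i$). The $j$th coordinate of such a vector is $\sum_{\iota(I) = j} w_I$, so the density problem reduces to the following finite combinatorial statement: given positive weights $w_1, \ldots, w_M$ with $\sum_j w_j = 1$ and $\max_j w_j \leq \varepsilon$, every target $\lambda \in \Delta^N$ can be matched coordinatewise within $O(N\varepsilon)$ by the weight totals of some partition $[M] = A_0 \sqcup \cdots \sqcup A_N$.

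For this last step I would invoke a standard greedy load-balancing procedure: process the weights in any order and place each into the block $A_i$ whose current deficit $\lambda_i - \sum_{I \in A_i} w_I$ is largest. At every intermediate stage the deficits sum to a nonnegative quantity, so the selected block has nonnegative deficit before insertion and deficit at least $-\varepsilon$ after; since the final deficits sum to $0$, each one lies in $[-\varepsilon, N\varepsilon]$. Choosing $n$ large enough that $N \beta^n$ falls below any prescribed tolerance completes the proof, and pushing forward through $\phi$ transfers density to $\mathrm{conv}(S)$.

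The only non-trivial ingredient is this greedy lemma, which delivers uniform control over all $N+1$ coordinates simultaneously; the affine reduction via $\phi$ and the unrolled description of $\mathscr{M}^n$ are routine. A possible alternative to the greedy step would be a direct one-dimensional density argument for $\mathscr{M}^\omega(\{0,1\}) \subseteq [0,1]$ combined with a coordinate-wise lifting, but the greedy approach has the advantage of giving the $N+1$ coordinates in one shot.
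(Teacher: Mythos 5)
Your proof is correct, and it takes a genuinely different route from the paper. The paper does not prove Theorem \ref{thm:originalgreengustin} directly: it quotes it from Green and Gustin and later recovers it as the special case $f=\mathrm{id}$ of Theorem \ref{thm:main}, whose sufficiency direction \ref{item:cond2} $\implies$ \ref{item:cond1} rests on Gustin's lemma (reducing to subsets $T\subseteq S$ with $|T|\le 2k$ so that the sets $\mathrm{int}(\mathrm{conv}(T))$ cover $\mathrm{int}(\mathrm{conv}(S))$) together with a recursive subdivision of $[0,1]^{h-1}$ into parallelepipeds with edge ratios $\alpha:(1-\alpha)$, whose vertices are realizable weight vectors and whose diameters shrink like $\alpha^n$. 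Your argument is more economical for the identity map: the reduction to finite $S$ and the affine pushforward $\phi$ exploit linearity to bypass all the topological bookkeeping (invariance of domain, interior-of-convex-hull lemmas, the $2k$-point reduction), and the greedy load-balancing lemma replaces the geometric subdivision by a one-shot approximation of an arbitrary barycentric coordinate vector with error at most $N\beta^n$ per coordinate. The individual steps all check out: the intertwining $\phi\circ\mathsf{m}=\mathsf{m}\circ\phi^{\times m}$ holds because $\phi$ is affine and $f=\mathrm{id}$; the unrolled description of $\mathscr{M}^n(\{e_0,\ldots,e_N\})$ is exact (arbitrary labellings $\iota$ on $[m]^n$ are achievable by the inductive factorization $\iota(j,I')=\iota_j(I')$, and mixed depths are absorbed since $\mathsf{m}(x,\ldots,x)=x$); and the deficit analysis is sound --- a block only receives a weight while its deficit is nonnegative, so every final deficit lies in $[-\varepsilon,N\varepsilon]$ once one uses that they sum to zero. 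What the paper's heavier machinery buys is exactly the generality needed for nonlinear $f$, where $\phi$ no longer commutes with the mean and density must be controlled on interiors; your route, by contrast, is self-contained and handles arbitrary $S$ (not necessarily $k$-dimensional) without the affine-hull reduction the paper mentions only in passing.
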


Theorem \ref{thm:originalgreengustin} is the starting point of this work. Our main question is to understand which continuous injections $f$ may replace the identity map $\mathrm{id}$ in the above result.

\section{Main Result}\label{sec:mainresultmain}
Our main result follows. 
\begin{thm}\label{thm:main}
Let $C\subseteq \mathbb{R}^k$ be an open convex set and $f: C\to \mathbb{R}^k$ be a continuous injection with convex image. Fix also two or more positive weights $\alpha_1,\ldots,\alpha_m$ with sum $1$, and a $k$-dimensional subset $S\subseteq C$. Then the following are equivalent\textup{:}
\begin{enumerate}[label={\rm (\roman{*})}]
    \item \label{item:cond1} $\mathscr{M}^\omega(S)$ is relatively dense in $\mathrm{conv}(S)$\textup{;}
    \item \label{item:cond2} $f[\mathrm{int}(\mathrm{conv}(S)]\subseteq \mathrm{int}(\mathrm{conv}(f[S]))$\textup{.}
\end{enumerate}
If, in addition, $S$ is compact, then they are also equivalent to\textup{:}
\begin{enumerate}[label={\rm (\roman{*})}]
\setcounter{enumi}{2}
    \item \label{item:cond3} $f[\mathrm{conv}(S)]\subseteq \mathrm{conv}(f[S])$\textup{.}
\end{enumerate}
\end{thm}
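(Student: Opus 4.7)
The whole argument rests on a single observation: since $C\subseteq \mathbb{R}^k$ is open and $f$ is a continuous injection, invariance of domain guarantees that $f[C]$ is open and that $f\colon C\to f[C]$ is a homeomorphism. This makes $f$ a conjugacy between $\mathscr{M}$ and the Green--Gustin operator $\mathscr{M}_{\mathrm{id}}$ associated with the identity map on $\mathbb{R}^k$: because $f[C]$ is convex, $f[\mathscr{M}(A)] = \mathscr{M}_{\mathrm{id}}(f[A])$ for every $A\subseteq C$, and induction yields $f[\mathscr{M}^\omega(S)] = \mathscr{M}_{\mathrm{id}}^\omega(f[S])$. Theorem \ref{thm:originalgreengustin} then asserts that the right-hand side is relatively dense in $\mathrm{conv}(f[S])$, so the entire task reduces to transferring this density statement back to $C$ through the continuous inverse $f^{-1}$.

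For \ref{item:cond2}$\Rightarrow$\ref{item:cond1}, I would fix $y\in\mathrm{conv}(S)$ and approximate it by interior points $y_n\in\mathrm{int}(\mathrm{conv}(S))$, which is possible because $S$ is $k$-dimensional and the interior of a convex subset of $\mathbb{R}^k$ is dense in the set whenever it is nonempty. By \ref{item:cond2}, each $f(y_n)$ lies in $\mathrm{int}(\mathrm{conv}(f[S]))$, so a sufficiently small Euclidean ball around $f(y_n)$ sits inside $\mathrm{conv}(f[S])$ and must meet $\mathscr{M}_{\mathrm{id}}^\omega(f[S])$ by Green--Gustin. Picking $z_n\in\mathscr{M}_{\mathrm{id}}^\omega(f[S])$ within $1/n$ of $f(y_n)$ and pulling back via $f^{-1}$, the point $f^{-1}(z_n)$ belongs to $\mathscr{M}^\omega(S)$, and a diagonal extraction produces a sequence in $\mathscr{M}^\omega(S)$ converging to $y$.

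For \ref{item:cond1}$\Rightarrow$\ref{item:cond2}, I would pick $x\in\mathrm{int}(\mathrm{conv}(S))$ and a small open neighborhood $U\subseteq\mathrm{int}(\mathrm{conv}(S))$ of $x$. By \ref{item:cond1}, every $y\in U$ is a limit of a sequence in $\mathscr{M}^\omega(S)$, so continuity of $f$ places $f(y)\in\mathrm{cl}(\mathscr{M}_{\mathrm{id}}^\omega(f[S]))\subseteq\mathrm{cl}(\mathrm{conv}(f[S]))$. Invariance of domain makes $f[U]$ open in $\mathbb{R}^k$, so $f[U]\subseteq\mathrm{int}(\mathrm{cl}(\mathrm{conv}(f[S])))$; since for a convex set with nonempty interior the interior and the interior of its closure coincide, this upgrades to $f[U]\subseteq\mathrm{int}(\mathrm{conv}(f[S]))$, and in particular $f(x)\in\mathrm{int}(\mathrm{conv}(f[S]))$.

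Under the additional hypothesis that $S$ is compact, \ref{item:cond3}$\Rightarrow$\ref{item:cond2} is immediate from invariance of domain: $f[\mathrm{int}(\mathrm{conv}(S))]$ is open in $\mathbb{R}^k$ and, by \ref{item:cond3}, contained in $\mathrm{conv}(f[S])$, hence in its interior. For the converse, compactness of $S$ passes to $f[S]$ by continuity and makes $\mathrm{conv}(f[S])$ compact, therefore closed; given any $y\in\mathrm{conv}(S)$, approximating it by interior points and invoking \ref{item:cond2} places $f(y)$ in $\mathrm{cl}(\mathrm{conv}(f[S])) = \mathrm{conv}(f[S])$. The chief subtlety I anticipate is the interaction between Euclidean closures and the nonlinear change of variables $f$ on the correct ambient space, but once invariance of domain has been recorded at the outset, the remainder is routine convex-analytic bookkeeping combined with the Green--Gustin density.
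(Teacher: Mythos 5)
Your strategy is sound and is genuinely different from the paper's. The conjugacy $f[\mathscr{M}(A)]=\mathscr{M}_{\mathrm{id}}(f[A])$ (valid precisely because $f[C]$ is convex, so the averaged point stays in the domain of $f^{-1}$) lets you treat Theorem \ref{thm:originalgreengustin} as a black box and transport its conclusion through the homeomorphism $f\colon C\to f[C]$ supplied by invariance of domain. The paper does not take this shortcut: it reproves the density from scratch inside the proof of \ref{item:cond2}$\Rightarrow$\ref{item:cond1}, via a recursive partition of the simplex of weights, Carath\'eodory's theorem, Gustin's $2k$-point refinement (Lemma \ref{lem:gustin}), and a lemma matching $f^{-1}$ with interiors and closures (Lemma \ref{lm:permutation}). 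What your route buys is brevity and a clear conceptual reason why only the boundary behaviour of $f$ (condition \ref{item:cond2}) can obstruct density; what the paper's route buys is self-containedness and the finite-subset localization \eqref{eq:conclusionT}, which it needs anyway to handle the interplay between $\mathrm{int}(\mathrm{conv}(f[S]))$ and its preimage. Your treatments of \ref{item:cond1}$\Rightarrow$\ref{item:cond2}, \ref{item:cond3}$\Rightarrow$\ref{item:cond2} and \ref{item:cond2}$\Rightarrow$\ref{item:cond3} are essentially the paper's arguments in sequential language, and they are correct (note that $\mathrm{int}(\mathrm{cl}(V))=\mathrm{int}(V)$ for convex $V$ holds even when $\mathrm{int}(V)=\emptyset$, so you need not check separately that $f[S]$ is $k$-dimensional).

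One step needs repair in \ref{item:cond2}$\Rightarrow$\ref{item:cond1}. The paper defines ``$A$ relatively dense in $B$'' as $\mathrm{cl}(A\cap B)=B$ in the relative topology of $B$, and $\mathscr{M}^\omega(S)$ is \emph{not} contained in $\mathrm{conv}(S)$ in general (e.g.\ $f(x,y)=(x,x^2+y^2)$ on $(0,\infty)^2$ with $S=\{(1,1),(2,2)\}$ already gives $\mathscr{M}(S)\not\subseteq\mathrm{conv}(S)$). Your construction only guarantees $f^{-1}(z_n)\in\mathscr{M}^\omega(S)$ with $f^{-1}(z_n)\to y$, i.e.\ $\mathrm{conv}(S)\subseteq\mathrm{cl}(\mathscr{M}^\omega(S))$, which is strictly weaker than the statement being proved. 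The fix is local: since $f(y_n)$ lies in the open set $f[\mathrm{int}(\mathrm{conv}(S))]\cap\mathrm{int}(\mathrm{conv}(f[S]))$ (the first set is open by invariance of domain, and contains $f(y_n)$ by construction; the second by \ref{item:cond2}), choose the ball in which you pick $z_n$ inside this intersection; then $f^{-1}(z_n)\in\mathrm{int}(\mathrm{conv}(S))$, so your approximants lie in $\mathscr{M}^\omega(S)\cap\mathrm{conv}(S)$ as required. With that adjustment the proof is complete.
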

Before we proceed to the proof of Theorem \ref{thm:main}, some remarks are in order. First of all, if $C=\mathbb{R}^k$ and $f=\mathrm{id}$, then condition \ref{item:cond2} clearly holds, hence we recover Green and Gustin's Theorem \ref{thm:originalgreengustin} 
(in the case where $S$ is not $k$-dimensional, it is enough to consider, modulo homeomorphisms, the problem replacing $\mathbb{R}^k$ with the affine hull of $S$; we omit details). 

Second, the hypothesis that $f$ has convex image is not automatically satisfied if $k\ge 2$: for instance, consider the continuous injection $f: C\to \mathbb{R}^2$ with 
$C:=(0,1)^2$
defined by $f(x,y):=(x,x^2+y)$ for every $(x,y) \in C$. 

Lastly, by condition \ref{item:cond3}, one may think that there exists some relationship between $f[\mathrm{conv}(S)]$ and $\mathrm{conv}(f[S])$, apart from the trivial inclusion 
$$
f[S] \subseteq f[\mathrm{conv}(S)] \cap \mathrm{conv}(f[S]).
$$
But this is not the case: we may have equality, for instance, if $k=2$, $C=(0,\infty)^2$, $S=\{(1,1),(2,2)\}$, and $f: C\to \mathbb{R}^2$ is the continuous injection defined by $f(x,y):=(x,x^2+y^2)$ for all $(x,y) \in C$. Indeed, $f[\mathrm{conv}(S)]=\{(x,2x^2): x \in [1,2]\}$ and $\mathrm{conv}(f[S])$ is the line segment between $(1,2)$ and $(2,4)$.

\begin{rmk}\label{rmk:nonequivalence} 
The equivalence between \ref{item:cond1} and \ref{item:cond3} does not hold without any conditions on $S$. For, suppose that $m=k=2$, $\alpha_1=\alpha_2=1/2$, and let $f: \mathbb{R}^2\to \mathbb{R}^2$ be an homeomorphism with the property that $f[D]=B$ and $f(1,0)=(1,0)$, where $D:=[-1,1]^2$ and $B$ is the closed unit ball $B$ with center $(0,0)$ and radius $1$ 
(for instance, it is sufficient to define $f(0,0):=(0.0)$ and $f(x,y):=(\max\{|x|,|y|\}\cdot \cos(\theta), \max\{|x|,|y|\}\cdot \sin(\theta))$ otherwise, where $\theta$ is the unique angle in $[0,2\pi)$ such that $x=k\cos(\theta)$ and $y=k\sin(\theta)$ for some $k \in \mathbb{R}$). 
Then, setting $S:=D\setminus \{(1,0)\}$, it follows that $\mathscr{M}^\omega(S)$ is relatively dense in $\mathrm{conv}(S)=D$, hence condition \ref{item:cond1} holds. On the other hand, $f[\mathrm{conv}(S)]=B$ is not contained in $\mathrm{conv}(f[S])=B\setminus \{(1,0)\}$, hence condition \ref{item:cond3} fails. 
\end{rmk}

As a consequence, we obtain an extension of Green and Gustin's Theorem \ref{thm:originalgreengustin} in the case of boxes $S^k$; here, an interval $I\subseteq \mathbb{R}$ may be unbounded. 
\begin{cor}\label{cor:greengustin}
Let $f_1,\ldots,f_k: I \to \mathbb R$ be continuous injections such that $I\subseteq \mathbb{R}$ is an open interval. Define the map $f: I^k \to \mathbb{R}^k$ by 
$$
\forall x \in I^k, \quad 
f(x):=(f_1(x_1),\ldots,f_k(x_k))
$$ 
Fix also two or more positive weights $\alpha_1,\ldots,\alpha_m$ with sum $1$. 
Then $\mathscr{M}^\omega(S^k)$ is relatively dense in $\mathrm{conv}(S^k)$ for each $S\subseteq I$ with nonempty interior. 
\end{cor}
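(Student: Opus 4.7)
The plan is to deduce the corollary from Theorem \ref{thm:main} by verifying all its hypotheses for $C=I^k$ and the product map $f$, and in particular checking condition \ref{item:cond2}. The argument factors through the one-dimensional case by exploiting the product structure.

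First, I would check the standing hypotheses. Each $f_i\colon I\to \mathbb{R}$ is a continuous injection on an interval, hence strictly monotone, and therefore a homeomorphism onto its image, with $f_i(I)$ an open interval. Consequently, $f$ is a continuous injection, and its image $f(I^k)=f_1(I)\times\cdots\times f_k(I)$ is a product of open intervals, hence an open convex subset of $\mathbb{R}^k$. Since $S\subseteq I$ has nonempty interior, $S^k$ has nonempty interior in $I^k$, so $S^k$ is $k$-dimensional.

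Next, I would reduce everything to coordinates using two standard product identities: for any $T_1,\ldots,T_k\subseteq\mathbb{R}$ one has
\[
\mathrm{conv}(T_1\times\cdots\times T_k)=\mathrm{conv}(T_1)\times\cdots\times\mathrm{conv}(T_k),
\qquad
\mathrm{int}(T_1\times\cdots\times T_k)=\mathrm{int}(T_1)\times\cdots\times\mathrm{int}(T_k),
\]
(the first is obtained by expanding products of convex combinations; the second is the elementary statement about the product topology). Applying these to $S^k$ and to $f[S^k]=f_1[S]\times\cdots\times f_k[S]$, condition \ref{item:cond2} of Theorem~\ref{thm:main} reduces to the coordinatewise inclusions
\[
f_i\bigl[\mathrm{int}(\mathrm{conv}(S))\bigr]\subseteq \mathrm{int}\bigl(\mathrm{conv}(f_i[S])\bigr) \qquad \text{for each } i=1,\ldots,k.
\]

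The main step is therefore the following one-dimensional claim, which is where monotonicity is used: if $f_i\colon I\to\mathbb{R}$ is a continuous injection and $S\subseteq I$, then $f_i\bigl[\mathrm{int}(\mathrm{conv}(S))\bigr]\subseteq\mathrm{int}(\mathrm{conv}(f_i[S]))$. I would argue as follows. Write $\mathrm{int}(\mathrm{conv}(S))=(a,b)$ with $a=\inf S$, $b=\sup S$ (possibly infinite). For any $x\in(a,b)$, pick $s_1,s_2\in S$ with $s_1<x<s_2$; by strict monotonicity of $f_i$, the value $f_i(x)$ lies strictly between $f_i(s_1)$ and $f_i(s_2)$, hence $f_i(x)\in\mathrm{conv}(f_i[S])$. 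Since $f_i$ is a homeomorphism onto $f_i(I)$, the image $f_i((a,b))$ is open in $f_i(I)$, so it lies in $\mathrm{int}(\mathrm{conv}(f_i[S]))$.

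I expect no serious obstacle beyond being careful about two points: (a) that $\mathrm{conv}(S)$ need not be a closed interval, so one must identify its \emph{interior} as the open interval $(\inf S,\sup S)$; (b) that all interiors are taken in the ambient open convex set $C=I^k$, but since $C$ is itself open in $\mathbb{R}^k$, these agree with the usual Euclidean interiors and the product-of-interiors identity applies unambiguously. With these observations, condition \ref{item:cond2} holds and Theorem~\ref{thm:main} yields the claimed relative density.
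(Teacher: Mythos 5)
Your proposal is correct and follows essentially the same route as the paper: both verify condition \ref{item:cond2} of Theorem \ref{thm:main} by reducing to coordinates via the product identities for convex hulls and interiors, and both settle the one-dimensional inclusion $f_i[\mathrm{int}(\mathrm{conv}(S))]\subseteq\mathrm{int}(\mathrm{conv}(f_i[S]))$ using that a continuous injection on an interval is monotone, maps intervals to intervals, and is an open map onto its image. The only cosmetic difference is that the paper first records the equality $f_i[\mathrm{conv}(S)]=\mathrm{conv}(f_i[S])$ and then takes interiors, whereas you argue the membership $f_i(x)\in\mathrm{conv}(f_i[S])$ directly from strict monotonicity.
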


We will need a series of intermediate lemmas given in Section \ref{sec:intermediate}. We will deduce Theorem \ref{thm:main} and Corollary \ref{cor:greengustin} in Section \ref{sec:proofs}. 


\section{Intermediate Lemmas}\label{sec:intermediate}

In what follows, $C\subseteq \mathbb{R}^k$ stands for an open convex set, $f: C\to \mathbb{R}^k$ for a continuous injection with convex image, and $S$ for a $k$-dimensional subset of $C$ (as in the statement of Theorem \ref{thm:main}).

\begin{lem}\label{lem:brower}
The set $f[C]$ is open in $\mathbb{R}^k$ and the function $f$ is an homeomorphism onto its image. 
\end{lem}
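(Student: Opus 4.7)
The plan is to invoke Brouwer's Invariance of Domain theorem directly. Recall the statement: if $U\subseteq \mathbb{R}^n$ is open and $g:U\to\mathbb{R}^n$ is a continuous injection, then $g[U]$ is open in $\mathbb{R}^n$ and $g$ is a homeomorphism onto its image.

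I would simply apply this with $n=k$, $U=C$ (which is open in $\mathbb{R}^k$ by the standing assumption in Section \ref{sec:intermediate}), and $g=f$ (which is continuous and injective by hypothesis). Invariance of Domain then yields at once that $f[C]$ is open in $\mathbb{R}^k$. To get the homeomorphism conclusion, one checks that $f^{-1}:f[C]\to C$ is continuous: for every open $V\subseteq C$, the set $V$ is itself open in $\mathbb{R}^k$, so applying Invariance of Domain to the restriction $f\restriction_V$ shows that $(f^{-1})^{-1}(V)=f[V]$ is open in $\mathbb{R}^k$, hence open in $f[C]$.

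Note that the additional standing hypothesis that $f[C]$ is convex is not used in this lemma; it only needs openness of $C$, continuity, and injectivity. The main (and only) obstacle is really just citing the correct theorem, since Invariance of Domain is a deep result — but once invoked, the conclusion is immediate. I would present the argument in two short sentences, referencing a standard source for the theorem.
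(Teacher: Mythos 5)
Your proposal is correct and coincides with the paper's proof, which simply invokes Brouwer's invariance of domain theorem; your extra sentences spelling out why $f$ is an open map (hence a homeomorphism onto its image) are a valid unpacking of that citation.
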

\begin{proof}
It follows by the Brouwer's invariance of domain theorem, see \cite{Bro1}. 
\end{proof}

\begin{lem}\label{lm:permutation} 
Fix a subset $U\subseteq f[C]$. Then $f^{-1} \left[\mathrm{int}(U)\right]=\mathrm{int}(f^{-1}\left[U\right])$ and $f^{-1} \left[\mathrm{cl}(U)\right]=\mathrm{cl}(f^{-1}\left[U\right])$. 
\end{lem}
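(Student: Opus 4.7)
The plan is to read this lemma off directly from Lemma \ref{lem:brower}. That lemma asserts that $f[C]$ is open in $\mathbb{R}^k$ and that the map $f \colon C \to f[C]$ is a homeomorphism. Since $C$ is open in $\mathbb{R}^k$ as well, the relative topologies of $C$ and of $f[C]$ coincide with the subspace topologies inherited from $\mathbb{R}^k$, so the operators $\mathrm{int}$ and $\mathrm{cl}$ appearing in the statement behave in the standard way with respect to the two ambient spaces.

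The two identities are then instances of the general fact that, for any homeomorphism $g \colon X \to Y$ between topological spaces and any $U \subseteq Y$,
$$
g^{-1}[\mathrm{int}_Y(U)] = \mathrm{int}_X(g^{-1}[U]) \quad \text{and} \quad g^{-1}[\mathrm{cl}_Y(U)] = \mathrm{cl}_X(g^{-1}[U]).
$$
For the interior identity, continuity of $f$ shows that $f^{-1}[\mathrm{int}(U)]$ is an open subset of $f^{-1}[U]$ in $C$, hence contained in $\mathrm{int}(f^{-1}[U])$; conversely, continuity of $f^{-1}$ together with the bijectivity of $f$ implies that $f$ sends the open set $\mathrm{int}(f^{-1}[U])$ to an open subset of $U$ in $f[C]$, which must therefore lie in $\mathrm{int}(U)$, so that $\mathrm{int}(f^{-1}[U]) \subseteq f^{-1}[\mathrm{int}(U)]$. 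The closure identity follows either by taking complements within $C$ and $f[C]$ and invoking the bijectivity of $f$, or equivalently from the symmetric fact that both $f$ and $f^{-1}$ send closed sets to closed sets.

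I anticipate no real obstacle here: the lemma is essentially a reformulation of the homeomorphism property already secured by Lemma \ref{lem:brower}, and in practice the proof can be written in a couple of lines by a direct appeal to the general topological principle above.
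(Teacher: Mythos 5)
Your proposal is correct and follows essentially the same route as the paper: one inclusion for the interior identity from continuity of $f$, the reverse inclusion from the fact that $f$ is an open map onto its image (via Lemma \ref{lem:brower}), and the closure identity by passing to complements inside $C$ and $f[C]$. No gaps.
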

\begin{proof}
On the one hand, by \cite[Proposition 1.4.1]{Engelk}, the continuity of $f$ is equivalent to $f^{-1}[\mathrm{int}(V)] \subseteq \mathrm{int}(f^{-1} \left[V\right])$ for all $V\subseteq f[C]$. 
On the other hand, 
in view of Lemma \ref{lem:brower}, 
the restriction of the inverse $f^{-1}$ on $\mathrm{int}(U)$ is continuous and injective. 
This implies that $f[\mathrm{int}(f^{-1}[U])]$ is an open set contained in $f[f^{-1}[U]]=U$. 
Hence $f[\mathrm{int}(f^{-1}[U])] \subseteq \mathrm{int}(U)$ and we conclude that $\mathrm{int}(f^{-1}[U]) \subseteq f^{-1}[\mathrm{int}(U)]$. 

For the second part, set $V^c:=f[C] \setminus V$ for each $V\subseteq f[C]$. Then 
\begin{displaymath}
\begin{split}
f^{-1}\left[\mathrm{cl}(U)\right]
=f^{-1}\left[\left(\mathrm{int}(U^c)\right)^c\right]
&=\left(f^{-1}\left[\mathrm{int}(U^c)\right]\right)^c\\
&=\left(\mathrm{int}\left(\left(f^{-1}\left[U\right]\right)^c\right)\right)^c=\mathrm{cl}\left(f^{-1}[U]\right),
\end{split}
\end{displaymath}
which concludes the proof. 
\end{proof}

Let us recall the classical result of Carath\'{e}odory:
\begin{lem}\label{lem:caratheodory}\cite[p.73]{Rudin}
Let $V\subseteq \mathbb{R}^k$ be an arbitrary set.  
Then 
$$
\mathrm{conv}(V)=\bigcup_{U\subseteq V:\, |U|\le k+1} \mathrm{conv}(U).
$$
\end{lem}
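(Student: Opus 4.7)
The inclusion $\supseteq$ is immediate, since $\mathrm{conv}(U)\subseteq\mathrm{conv}(V)$ for every $U\subseteq V$, so the plan is to establish the opposite inclusion $\subseteq$. Given $x\in\mathrm{conv}(V)$, I would write $x$ as a convex combination $x=\sum_{i=1}^{n}\lambda_{i}v_{i}$ of distinct points $v_{1},\ldots,v_{n}\in V$ with all $\lambda_{i}>0$ and $\sum_{i}\lambda_{i}=1$, choosing such a representation with the number of terms $n$ minimal, and then aim to prove $n\le k+1$.

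Assume toward a contradiction that $n\ge k+2$. Then the $n-1\ge k+1$ vectors $v_{2}-v_{1},\ldots,v_{n}-v_{1}$ must be linearly dependent in $\mathbb{R}^{k}$, producing scalars $\mu_{2},\ldots,\mu_{n}$, not all zero, such that $\sum_{i=2}^{n}\mu_{i}(v_{i}-v_{1})=0$. Setting $\mu_{1}:=-\sum_{i=2}^{n}\mu_{i}$ then yields a nontrivial relation $\sum_{i}\mu_{i}v_{i}=0$ with the additional property $\sum_{i}\mu_{i}=0$.

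Next I would perturb the original convex combination: for $t\in\mathbb{R}$, the coefficients $\lambda_{i}(t):=\lambda_{i}-t\mu_{i}$ still sum to $1$ and still represent $x=\sum_{i}\lambda_{i}(t)v_{i}$. Because $\sum_{i}\mu_{i}=0$ and the $\mu_{i}$ are not all zero, at least one of them is strictly positive, so the finite quantity $t^{\ast}:=\min\{\lambda_{i}/\mu_{i}:\mu_{i}>0\}$ is well defined and positive. Taking $t=t^{\ast}$ would make every $\lambda_{i}(t^{\ast})$ nonnegative while forcing at least one of them to vanish, producing a convex representation of $x$ with strictly fewer than $n$ positive coefficients and contradicting minimality. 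Hence $n\le k+1$, and $x$ lies in $\mathrm{conv}(U)$ for some $U\subseteq V$ with $|U|\le k+1$.

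The only delicate point is the verification surrounding $t^{\ast}$: one must check that $\lambda_{i}(t^{\ast})\ge 0$ holds for \emph{all} indices, including those with $\mu_{i}\le 0$, where this follows because $t^{\ast}>0$ forces $-t^{\ast}\mu_{i}\ge 0$, and one must check that at least one coefficient becomes exactly $0$, which is built into the definition of $t^{\ast}$ as a minimum over a finite nonempty set. Once these two facts are in hand, the induction on $n$ (implicit in the minimality argument) closes the proof.
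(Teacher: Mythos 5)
Your proof is correct: it is the standard Carath\'{e}odory argument (minimal representation, affine dependence among $k+2$ or more points, and the perturbation $\lambda_i - t^{\ast}\mu_i$), and all the delicate points you flag are handled properly. The paper does not prove this lemma but simply cites it from the literature, and the cited proof is essentially the one you give.
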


We will need also the following characterization of interior of convex hulls:
\begin{lem}\label{lem:gustin} \cite{MR20800}  
Let $V\subseteq \mathbb{R}^k$ be an arbitrary set. Then 
$$
\mathrm{int}(\mathrm{conv}(V))=\bigcup_{U\subseteq V:\, |U|\le 2k}\mathrm{int}(\mathrm{conv}(U)).
$$
\end{lem}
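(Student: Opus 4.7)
The plan is to prove both inclusions; $\supseteq$ is immediate from monotonicity, since $U \subseteq V$ implies $\mathrm{int}(\mathrm{conv}(U)) \subseteq \mathrm{int}(\mathrm{conv}(V))$. For the substantive direction $\subseteq$, fix $x \in \mathrm{int}(\mathrm{conv}(V))$ and translate so that $x = 0$. The first step is the standard reformulation that $0 \in \mathrm{int}(\mathrm{conv}(W))$ if and only if $W$ \emph{positively spans} $\mathbb{R}^k$, meaning every vector in $\mathbb{R}^k$ is a non-negative linear combination of elements of $W$. One direction uses that $\mathrm{int}(\mathrm{conv}(W))$ contains a ball $B(0, r)$; the converse follows by representing each $\pm r e_j$ as a convex combination of elements of $W$ and noting that the convex hull of these $2k$ points is a cross-polytope with $0$ in its interior.

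Next, I would exhibit a finite positively spanning subset $V_0 \subseteq V$. Using that $\pm r e_j \in \mathrm{conv}(V)$ for some $r > 0$ and $j = 1, \ldots, k$, Lemma~\ref{lem:caratheodory} writes each such point as a convex combination of at most $k+1$ elements of $V$; the union of these points (of size at most $2k(k+1)$) is a finite $V_0 \subseteq V$ with $0 \in \mathrm{int}(\mathrm{conv}(V_0))$. I then pick $U \subseteq V_0$ minimal (under inclusion) with $0 \in \mathrm{int}(\mathrm{conv}(U))$, which makes $U$ a \emph{minimal positively spanning set} of $\mathbb{R}^k$.

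The main obstacle is to show that any minimal positively spanning set of $\mathbb{R}^k$ has at most $2k$ elements; this is a classical theorem of C.~Davis, and the archetypal example $U = \{\pm e_1, \ldots, \pm e_k\}$ shows the bound is sharp. For each $u \in U$, minimality provides a functional $\xi_u \ne 0$ with $\langle \xi_u, u' \rangle \le 0$ for every $u' \ne u$ and $\langle \xi_u, u \rangle > 0$; the argument then partitions $U$ into equivalence classes determined by which elements co-appear with strictly positive coefficients in some non-trivial positive linear dependence supported in $U$, and shows, via a careful dimension count on the linear spans of these classes and the separating functionals, that these classes together contain at most $2k$ vectors. Once this bound is granted, the chosen $U$ satisfies $U \subseteq V$, $|U| \le 2k$, and $x \in \mathrm{int}(\mathrm{conv}(U))$, completing the non-trivial inclusion.
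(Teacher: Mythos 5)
The paper offers no proof of this lemma: it is quoted verbatim from Gustin's 1947 note \cite{MR20800}, so there is nothing internal to compare against. Your argument is a correct and genuinely different (self-assembled) route. The reductions are all sound: $\supseteq$ by monotonicity; the equivalence between $0\in\mathrm{int}(\mathrm{conv}(W))$ and $W$ positively spanning $\mathbb{R}^k$ (both directions as you describe, with the minor care that the convex combinations representing $\pm e_j$ must be rescaled so the coefficients sum to $1$); the passage to a finite $V_0\subseteq V$ via Carath\'{e}odory applied to the $2k$ points $\pm r e_j$; and the extraction of an inclusion-minimal $U\subseteq V_0$ with $0\in\mathrm{int}(\mathrm{conv}(U))$, which by the equivalence is exactly a minimal positive spanning set and still satisfies $x\in\mathrm{int}(\mathrm{conv}(U))$. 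At that point the entire content of the lemma has been concentrated into the single claim that a minimal positive spanning set of $\mathbb{R}^k$ has at most $2k$ elements, which you attribute to C.~Davis; that attribution is correct (Davis, \emph{Theory of positive linear dependence}, Amer.~J.~Math.~76 (1954)), and your description of its proof (separating functionals from minimality, then a partition-and-dimension-count) matches the standard argument. Two remarks: first, if the intent is a self-contained proof, the ``careful dimension count'' is precisely the hard step and would need to be written out rather than gestured at --- as it stands you have replaced one citation (Gustin) by another (Davis), which is legitimate but not a proof from scratch; second, since Davis's theorem postdates Gustin's note by seven years, Gustin's own argument is necessarily a different, direct geometric one, so your route is a genuine alternative rather than a reconstruction. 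The sharpness example $\{\pm e_1,\ldots,\pm e_k\}$ you mention is the same one the paper records after the lemma.
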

Note that the constant $2k$ in the above lemma cannot be improved: indeed, if $V:=\{e_1,-e_1,\ldots,e_k,-e_k\}$, where $\{e_1,\ldots,e_k\}$ stands for the canonical basis of $\mathbb{R}^k$, then the zero vector belongs to $\mathrm{int}(\mathrm{conv}(V))$, but it does not belong to $\mathrm{int}(\mathrm{conv}(U))$ for all $U\subseteq V$ with $|U|=2k-1$.

\begin{lem}\label{lm:divisioncaratheodorynew2} 
The following inclusion holds\textup{:}
\begin{equation}\label{eq:claim2}
f^{-1}\left[\mathrm{int}\left(\mathrm{conv}(f[S])\right)\right]\subseteq 
\bigcup_{T\subseteq S\colon |T|\le 2k}\mathrm{cl}\left(f^{-1}\left[\mathrm{int}\left(\mathrm{conv}(f[T])\right)\right]\right).
\end{equation}
\end{lem}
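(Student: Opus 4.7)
The plan is to reduce the claim to Lemma \ref{lem:gustin} by exploiting the injectivity of $f$ to lift Carath\'{e}odory-type finite subsets from $f[S]$ back to $S$. First I would fix an arbitrary $x \in f^{-1}[\mathrm{int}(\mathrm{conv}(f[S]))]$, i.e., a point $x \in C$ with $f(x) \in \mathrm{int}(\mathrm{conv}(f[S]))$, and apply Lemma \ref{lem:gustin} to the set $V := f[S] \subseteq \mathbb{R}^k$. This yields a subset $U \subseteq f[S]$ with $|U| \le 2k$ and $f(x) \in \mathrm{int}(\mathrm{conv}(U))$.

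Next, I would lift $U$ to a subset of $S$ as follows. Since $U \subseteq f[S]$, each $u \in U$ admits some preimage in $S$; since $f$ is injective on $C \supseteq S$ this preimage is unique, so setting $T := f^{-1}[U] \cap S$ yields $T \subseteq S$ with $|T| = |U| \le 2k$ and $f[T] = U$. Consequently $\mathrm{int}(\mathrm{conv}(f[T])) = \mathrm{int}(\mathrm{conv}(U))$ contains $f(x)$, so that
$x \in f^{-1}[\mathrm{int}(\mathrm{conv}(f[T]))] \subseteq \mathrm{cl}(f^{-1}[\mathrm{int}(\mathrm{conv}(f[T]))])$,
which lies inside the union on the right-hand side of \eqref{eq:claim2}.

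There is no genuine obstacle to this strategy: once Lemma \ref{lem:gustin} is available and the injectivity of $f$ is invoked, the proof is a one-step lifting. In fact, the argument actually delivers the stronger inclusion in which the closure operator on the right-hand side of \eqref{eq:claim2} is removed; the closure is presumably retained in the statement because that is the form needed in the subsequent applications of the lemma.
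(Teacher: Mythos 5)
Your proof is correct and follows essentially the same route as the paper: both arguments rest on Lemma \ref{lem:gustin} applied to $V=f[S]$ together with the injectivity of $f$ to identify the $2k$-point subsets of $f[S]$ with images $f[T]$ of subsets $T\subseteq S$. The only difference is cosmetic --- you argue pointwise where the paper rewrites both sides as sets (invoking Lemma \ref{lm:permutation} for the right-hand side) --- and your observation that the closure operator is not actually needed for the inclusion is likewise implicit in the paper's final step.
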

\begin{proof}
Note that the claim is obvious if $f[S]$ is not $k$-dimensional. Hence, let us assume hereafter that $\mathrm{int}(\mathrm{conv}(f[S]))\neq \emptyset$. 
By the injectivity of $f$ and Lemma \ref{lem:gustin}, the left hand side of \eqref{eq:claim2} can be rewritten as
$$
f^{-1}\left[\bigcup_{U \subseteq S\colon |U|\le 2k}\mathrm{int}\left(\mathrm{conv}(f[U])\right)\right].
$$
Now, fix $T\subseteq S$ with $|T|\le 2k$ and note that $\mathrm{int}\left(\mathrm{conv}(f[T])\right)$ is an open set contained in $f[C]$, so by the continuity of $f$ the set $f^{-1}[\mathrm{int}\left(\mathrm{conv}(f[T])\right)]$ is open and contained in $C$. Thus $\mathrm{int}\left(\mathrm{conv}(f[T])\right) \subseteq f[C]$, therefore by Lemma \ref{lm:permutation} the right hand side of \eqref{eq:claim2} can be rewritten as
$$
f^{-1}\left[\bigcup_{T\subseteq S\colon |T|\le 2k}
\mathrm{cl}\left(\mathrm{int}\left(\mathrm{conv}(f[T])\right)\right)\right].
$$ 
At this point, the conclusion is immediate.
\end{proof}

\begin{lem}\label{lm:sufficiencyinterior}
Let $V\subseteq \mathbb{R}^k$ a be $k$-dimensional convex set. If $H\subseteq \mathbb{R}^k$ is relatively dense in $\mathrm{int}(V)$, then $H$ is relatively dense in $V$.
\end{lem}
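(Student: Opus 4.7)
The plan is to unpack the definition of relative density and reduce the claim to a classical fact about convex sets. By hypothesis, $H$ being relatively dense in $\mathrm{int}(V)$ means that every point of $\mathrm{int}(V)$ is a limit point (in $\mathbb{R}^k$) of $H \cap \mathrm{int}(V)$, i.e.\ $\mathrm{int}(V) \subseteq \mathrm{cl}(H \cap \mathrm{int}(V))$. We want to conclude the analogous statement with $V$ in place of $\mathrm{int}(V)$, that is, $V \subseteq \mathrm{cl}(H \cap V)$.

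The key ingredient will be the standard result that, for a convex set $V \subseteq \mathbb{R}^k$ with nonempty interior (which is guaranteed here since $V$ is $k$-dimensional and convex), one has $V \subseteq \mathrm{cl}(\mathrm{int}(V))$. This follows from the classical observation that, picking any interior point $x_0 \in \mathrm{int}(V)$ and any $v \in V$, the whole half-open segment $\{(1-t)x_0 + tv : t \in [0,1)\}$ lies in $\mathrm{int}(V)$, and letting $t \to 1^-$ shows $v \in \mathrm{cl}(\mathrm{int}(V))$.

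Granting this, the computation is immediate: since $H \cap \mathrm{int}(V) \subseteq H \cap V$, taking closures yields $\mathrm{cl}(H \cap \mathrm{int}(V)) \subseteq \mathrm{cl}(H \cap V)$, and therefore
\[
V \subseteq \mathrm{cl}(\mathrm{int}(V)) \subseteq \mathrm{cl}\bigl(\mathrm{cl}(H \cap \mathrm{int}(V))\bigr) = \mathrm{cl}(H \cap \mathrm{int}(V)) \subseteq \mathrm{cl}(H \cap V),
\]
which is exactly the assertion that $H$ is relatively dense in $V$.

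There is no real obstacle here; the only subtlety is making sure to invoke $k$-dimensionality so that $\mathrm{int}(V) \neq \emptyset$, which is what makes the half-open segment argument work and prevents the conclusion $V \subseteq \mathrm{cl}(\mathrm{int}(V))$ from being vacuous or false.
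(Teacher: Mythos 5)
Your proof is correct and follows essentially the same route as the paper: both reduce the lemma to the fact that $\mathrm{int}(V)$ is relatively dense in $V$, and both establish that fact by joining a point of $V$ to the (nonempty) interior via convexity — you through the standard half-open segment lemma, the paper through an explicit cone over a closed ball. Your formulation as the chain $V \subseteq \mathrm{cl}(\mathrm{int}(V)) \subseteq \mathrm{cl}(H \cap \mathrm{int}(V)) \subseteq \mathrm{cl}(H \cap V)$ is, if anything, a slightly cleaner packaging of the same argument.
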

\begin{proof}
It is sufficient to show that $\mathrm{int}(V)$ is relatively dense in $V$. 
For, let $U$ be an open ball centered at some boundary point of $V$, so that $U\cap V\neq \emptyset$, and let us say $u \in U\cap V$. 
Since $\mathrm{int}(V)\neq \emptyset$, there exists a closed ball $W$ with positive radius contained in $\mathrm{int}(V)$. If $u$ belongs to $\mathrm{int}(V)$, the claim holds. Otherwise $u$ has to be a boundary point of $V$. 
Now, since $V$ is convex, the subset $D:=\{\gamma u+(1-\gamma)w: \gamma \in [0,1], w \in W\}$ is contained in $V$. 
In addition, $u$ is an interior point of $U$, hence there exists an open ball $B$ centered in $u$ with $B\subseteq U$. By monotonicity, we obtain $\mathrm{int}(D) \cap B \subseteq \mathrm{int}(D) \cap U \subseteq \mathrm{int}(V) \cap U$. The claim follows from the fact the left hand side is clearly nonempty.
\end{proof}

Lastly, let us recall the following useful property: 
\begin{lem}\label{lem:intcl} \cite[Chapter II, Section 6, Proposition 16 and Corollaire 1]{MR633754} 
Let $U,V\subseteq \mathbb{R}^k$ be $k$-dimensional convex sets with $U$ open and $V$ closed. Then 
$$
\mathrm{int}(\mathrm{cl}(U))=U 
\quad \text{ and }\quad 
\mathrm{cl}(\mathrm{int}(V))=V.
$$
\end{lem}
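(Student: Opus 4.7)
The plan is to reduce both equalities to a single ``segment lemma'' from convex analysis: if $W \subseteq \mathbb{R}^k$ is convex, $u \in \mathrm{int}(W)$, $v \in \mathrm{cl}(W)$, and $t \in [0, 1)$, then $(1-t)u + tv \in \mathrm{int}(W)$. To establish the segment lemma I would run the classical ``ice-cream cone'' argument: choose $r > 0$ so that the open ball of radius $r$ around $u$ is contained in $W$, and pick $v_n \in W$ with $v_n \to v$; by convexity, the convex hull of this ball and $\{v_n\}$ lies in $W$ and contains the open ball of radius $(1-t)r$ around $(1-t)u + tv_n$; since the centers converge to $(1-t)u + tv$, for large $n$ the corresponding ball contains $(1-t)u + tv$, placing it in $\mathrm{int}(W)$.

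Once the segment lemma is in hand, the first equality $\mathrm{int}(\mathrm{cl}(U)) = U$ goes as follows. The inclusion $U \subseteq \mathrm{int}(\mathrm{cl}(U))$ is immediate from openness of $U$. For the reverse, take $x \in \mathrm{int}(\mathrm{cl}(U))$ and any $u_0 \in U$ (nonempty by $k$-dimensionality of $U$). Because $\mathrm{cl}(U)$ contains a full neighborhood of $x$, for all sufficiently small $\delta > 0$ the point $y := x + \delta(x - u_0)$ still lies in $\mathrm{cl}(U)$, and $x = \frac{\delta}{1+\delta}u_0 + \frac{1}{1+\delta}y$ expresses $x$ as a convex combination with parameter in $(0,1)$. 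Applying the segment lemma to $u_0 \in \mathrm{int}(U) = U$ and $y \in \mathrm{cl}(U)$ forces $x \in U$.

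The second equality $\mathrm{cl}(\mathrm{int}(V)) = V$ is similar but easier. The inclusion $\mathrm{cl}(\mathrm{int}(V)) \subseteq V$ is immediate from closedness of $V$. For the reverse, $k$-dimensionality of $V$ guarantees some $u_0 \in \mathrm{int}(V)$; given $v \in V$, the segment lemma supplies $(1-t)u_0 + tv \in \mathrm{int}(V)$ for every $t \in [0, 1)$, and letting $t \to 1$ exhibits $v$ as a limit of points of $\mathrm{int}(V)$.

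The main obstacle is the segment lemma itself, and specifically the observation that the ice-cream-cone argument yields an open ball of \emph{uniform} radius $(1-t)r$ around each $(1-t)u + tv_n$. That uniformity is what lets us pass to the limit $v_n \to v$ without losing openness; after it is secured, each of the two equalities follows from a one-line segment-extension argument.
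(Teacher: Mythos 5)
Your proposal is correct. Note that the paper does not prove this lemma at all: it simply cites Bourbaki (Chapter II, Section 6, Proposition 16 and Corollaire 1), so any comparison is with the standard textbook argument rather than with an argument in the paper. Your reduction of both identities to the line segment principle --- $u \in \mathrm{int}(W)$, $v \in \mathrm{cl}(W)$, $t \in [0,1)$ imply $(1-t)u+tv \in \mathrm{int}(W)$ --- is exactly the classical route (cf.\ Rockafellar's Theorem 6.1), and your ice-cream-cone verification of that principle is sound: the key point, which you correctly isolate, is that $\mathrm{conv}\left(B(u,r)\cup\{v_n\}\right)$ contains the \emph{open} ball of uniform radius $(1-t)r$ about $(1-t)u+tv_n$, so that passing to the limit in $n$ lands the point $(1-t)u+tv$ inside one of these balls and hence in $\mathrm{int}(W)$. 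The two derived equalities are also handled correctly; the only hypotheses you use are the ones actually needed ($k$-dimensionality to guarantee $U\neq\emptyset$ in the first case and $\mathrm{int}(V)\neq\emptyset$ in the second, openness of $U$ and closedness of $V$ for the trivial inclusions). This is a complete, self-contained substitute for the citation.
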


\section{Proofs Section}\label{sec:proofs}

We proceed now to the proof of Theorem \ref{thm:main} and Corollary \ref{cor:greengustin}. 

\begin{proof}[Proof of Theorem \ref{thm:main}]
\ref{item:cond1} $\implies$ \ref{item:cond2}. 
Let us suppose on the contrary that condition \ref{item:cond1} holds, while \ref{item:cond2} does not. 
To simplify the notation, define 
$$
P:=\mathrm{int}\left(\mathrm{conv}(S)\right),\,  Q:=f^{-1}\left[\mathrm{int}\left(\mathrm{conv}(f[S])\right)\right], \text{ and }
R:=\mathrm{int}\left(\mathrm{conv}(f[S])\right).
$$
Hence $P\setminus Q$ is nonempty. Note all $P$, $Q$, and $R$ are open subsets of $\mathbb{R}^k$. 
\begin{claim}\label{cl:openball}
$P\setminus \mathrm{cl}(Q)$ has nonempty interior. 
\end{claim}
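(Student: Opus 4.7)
The first observation is that $P\setminus \mathrm{cl}(Q)$ is automatically open in $\mathbb{R}^k$, being the intersection of the open set $P$ (open in $C$, hence in $\mathbb{R}^k$, since $C$ is open) with the complement of a closed set. Consequently, the assertion ``$P\setminus \mathrm{cl}(Q)$ has nonempty interior'' reduces to ``$P\setminus \mathrm{cl}(Q)\neq \emptyset$.'' The standing hypothesis that \ref{item:cond2} fails produces a point $x_0\in P$ with $f(x_0)\notin R$, so $P\setminus Q\neq \emptyset$; this will be the starting point.

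The next step is to translate the target condition through the homeomorphism $f$. Since $f[C]$ is convex (by hypothesis on $f$) we have $R\subseteq f[C]$, and by Lemma \ref{lem:brower} the set $f[C]$ is open in $\mathbb{R}^k$, so closures within $f[C]$ coincide with closures in $\mathbb{R}^k$ intersected with $f[C]$. Applying Lemma \ref{lm:permutation} one obtains that $y\in C$ lies in $\mathrm{cl}(Q)$ if and only if $f(y)\in \mathrm{cl}(R)$ (where from here on $\mathrm{cl}$ denotes closure in $\mathbb{R}^k$; note $f(y)\in f[C]$ automatically). Thus it suffices to find $y\in P$ with $f(y)\notin \mathrm{cl}(R)$.

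The last step is a case analysis on the location of $f(x_0)$. If $R=\emptyset$, then $\mathrm{cl}(Q)=\emptyset$ and $P\setminus \mathrm{cl}(Q)=P$ is nonempty because $S$ is $k$-dimensional. If $f(x_0)\notin \mathrm{cl}(R)$, then $y:=x_0$ already works. The main obstacle is the boundary case $f(x_0)\in \partial R$. Here I apply the supporting hyperplane theorem to the nonempty open convex set $R$ at its boundary point $f(x_0)$: there exist $v\in \mathbb{R}^k\setminus\{0\}$ and $c\in \mathbb{R}$ with $\langle v,z\rangle\le c$ for all $z\in R$ and $\langle v,f(x_0)\rangle=c$. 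By Lemma \ref{lem:brower}, $f[P]$ is open in $\mathbb{R}^k$ and contains $f(x_0)$, so for sufficiently small $\varepsilon>0$ the perturbation $z_0:=f(x_0)+\varepsilon v$ still lies in $f[P]$, while $\langle v,z_0\rangle=c+\varepsilon\|v\|^2>c$ forces $z_0\notin \mathrm{cl}(R)$; then $y:=f^{-1}(z_0)\in P$ is the required witness. The only genuine obstacle is this boundary case, and it is resolved precisely by pairing the supporting hyperplane theorem for the nonempty open convex set $R$ with the openness of $f[P]$ in $\mathbb{R}^k$.
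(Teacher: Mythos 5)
Your proof is correct, and its skeleton matches the paper's: both reduce the claim to showing that $f[P]\not\subseteq\mathrm{cl}(R)$, using the openness of $f[P]$ (Lemma \ref{lem:brower}) and the transfer $\mathrm{cl}(Q)=f^{-1}[\mathrm{cl}(R)]$ via Lemma \ref{lm:permutation}, and both observe that $P\setminus\mathrm{cl}(Q)$ is already open so only nonemptiness matters. Where you diverge is in the key convexity step. The paper argues by contradiction: if $f[P]\subseteq\mathrm{cl}(R)$, then openness of $f[P]$ gives $f[P]\subseteq\mathrm{int}(\mathrm{cl}(R))$, and the Bourbaki fact $\mathrm{int}(\mathrm{cl}(U))=U$ for open convex $U$ (Lemma \ref{lem:intcl}) collapses this to $f[P]\subseteq R$, contradicting the failure of \ref{item:cond2}. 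You instead work pointwise at the witness $x_0$ with $f(x_0)\in\partial R$ and escape $\mathrm{cl}(R)$ by perturbing along the outward normal of a supporting hyperplane, staying inside the open set $f[P]$. The two ingredients are close cousins (the identity $\mathrm{int}(\mathrm{cl}(U))=U$ is itself usually proved via separation), so neither buys extra generality; the paper's version is slicker because Lemma \ref{lem:intcl} is already on hand and gets reused later in the proof of \ref{item:cond2} $\implies$ \ref{item:cond3}, while yours is more self-contained and makes the geometric mechanism explicit. Your handling of the degenerate case $R=\emptyset$ and of the relative-versus-ambient closure issue is also correct.
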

\begin{proof}
If \ref{item:cond2} does not hold then $f[P]$ is not contained in $R$. Let us suppose that $f[P]$ is contained in $\mathrm{cl}(R)$. Since $f[P]$ is open by Lemma \ref{lem:brower}, it follows that $f[P] \subseteq \mathrm{int}(\mathrm{cl}(R))$. 
Since the interior of the closure of an open convex set coincides with the set itself by Lemma \ref{lem:intcl}, it follows that $f[P] \subseteq R$, contradicting our assumption. Therefore $f[P]\setminus \mathrm{cl}(R)\neq \emptyset$.  

Let $g$ be the restriction of the inverse map $f^{-1}$ to $R$ and note that $R=f[Q]$. By Lemma \ref{lem:brower}, $g$ is a homeomorphism onto its image $Q$. It follows by Lemma \ref{lm:permutation} that $f[P]$ is not contained in $\mathrm{cl}(f[Q])=\mathrm{cl}(g^{-1}[Q])=g^{-1}[\mathrm{cl}(Q)]=f[\mathrm{cl}(Q)]$. By the injectivity of $f$ this is equivalent to saying that the open set $P\setminus\mathrm{cl}(Q)$ is nonempty, which proves the claim. 
\end{proof}

Let us suppose that $\mathscr{M}^\omega(S)$ is relatively dense in $\mathrm{conv}(S)$. Since $\mathscr{M}^\omega(S)\subseteq f^{-1}[\mathrm{conv}(f[S])]$ and $P\subseteq \mathrm{conv}(S)$, we obtain that $f^{-1}[\mathrm{conv}(f[S])]$ is relatively dense in $P$, that is, $V\cap f^{-1}[\mathrm{conv}(f[S])] \neq \emptyset$ for each nonempty open subset $V\subseteq P$. Taking into account Claim \ref{cl:openball}, we obtain 
\begin{equation}\label{eq:contradictionfirstpart}
(P\setminus \mathrm{cl}(Q)) \cap f^{-1}[\mathrm{conv}(f[S])]\neq \emptyset. 
\end{equation}
Since $S$ is $k$-dimensional and $f$ is an homeomorphism by Lemma \ref{lem:brower}, $f[S]$ is $k$-dimensional, so that $R$ is a nonempty open set. It follows by Lemma \ref{lm:permutation} and Lemma \ref{lem:intcl} that
$$
f^{-1}\left[\mathrm{conv}(f[S])\right]\subseteq f^{-1}\left[\mathrm{cl}(\mathrm{conv}(f[S]))\right]=f^{-1}[\mathrm{cl}(R)]=\mathrm{cl}(Q).
$$
However, this contradicts \eqref{eq:contradictionfirstpart}.

\medskip

\ref{item:cond2} $\implies$ \ref{item:cond1}. 
Since $S$ is $k$-dimensional, $|S| \ge k+1 \ge 2$. 
Let us a fix a subset $T\subseteq S$, let us say $T=\{t_1,\ldots,t_h\}$, with $2\le h\le 2k$. Let $\Delta$ be the $(h-1)$-dimensional simplex, that is, the set of vectors $\delta=(\delta_1,\ldots,\delta_h)\in [0,1]^h$ with $\sum_{i\le h} \delta_i=1$. Moreover, define $W:=\bigcup_n W_n$ where, for each integer $n\ge 1$, 
$$
W_n:=\left\{\gamma\in \Delta: f^{-1}\left(\gamma_1f(t_1)+\cdots+\gamma_h f(t_h)\right) \in \mathscr{M}^n(T)\right\}. 
$$
\begin{claim}\label{claim:denseness}
$W$ is dense in $\Delta$. 
\end{claim}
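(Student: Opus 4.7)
The plan is to reduce the claim to the original Green--Gustin theorem (Theorem \ref{thm:originalgreengustin}) applied inside the simplex $\Delta \subseteq \mathbb{R}^h$ itself, exploiting the fact that once we pass to barycentric coordinates, the quasi-arithmetic mean with respect to $f$ becomes the ordinary arithmetic $\alpha$-mean.

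First, I would set $p_i := f(t_i)$ for $i=1,\ldots,h$ and introduce, purely in the simplex, the auxiliary iteration
\begin{equation*}
V_0 := \{e_1,\ldots,e_h\} \subseteq \Delta,
\qquad
V_{n+1} := \left\{\alpha_1 \gamma^{(1)} + \cdots + \alpha_m \gamma^{(m)} : \gamma^{(1)},\ldots,\gamma^{(m)} \in V_n\right\},
\end{equation*}
and $V:=\bigcup_{n\ge 0} V_n$. Interpreted in $\mathbb{R}^h$ with $f=\mathrm{id}$, the set $V$ is precisely $\mathscr{M}^\omega(\{e_1,\ldots,e_h\})$, and $\mathrm{conv}(\{e_1,\ldots,e_h\}) = \Delta$. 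Hence Theorem \ref{thm:originalgreengustin} applies and yields that $V$ is relatively dense in $\Delta$.

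Second, I would show by induction on $n$ that $V_n \subseteq W_n$, which will immediately give $V \subseteq W$ and complete the proof. For $n=0$, the vertex $e_j$ sends $\sum_i (e_j)_i p_i = p_j = f(t_j)$, so $f^{-1}(\sum_i (e_j)_i p_i) = t_j \in \mathscr{M}^0(T)$. For the inductive step, take $\gamma = \sum_{i=1}^m \alpha_i \gamma^{(i)}$ with each $\gamma^{(i)} \in V_n \subseteq W_n$; by hypothesis there are points $x_i := f^{-1}\bigl(\sum_j \gamma^{(i)}_j p_j\bigr) \in \mathscr{M}^n(T)$. A direct computation, exchanging the order of summation, gives
\begin{equation*}
\mathsf{m}(x_1,\ldots,x_m) = f^{-1}\!\left(\sum_{i=1}^m \alpha_i f(x_i)\right) = f^{-1}\!\left(\sum_{j=1}^h \left(\sum_{i=1}^m \alpha_i \gamma^{(i)}_j\right) p_j\right) = f^{-1}\!\left(\sum_{j=1}^h \gamma_j p_j\right),
\end{equation*}
which lies in $\mathscr{M}^{n+1}(T)$, so $\gamma \in W_{n+1}$, as desired.

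The only subtle point — and the main obstacle to naming — is that the barycentric coordinates of a point in $\mathrm{conv}(f[T])$ need not be unique (the $p_j$ can be affinely dependent), so one must resist the temptation to think of $V_n$ as the \emph{set} of coordinates of $f[\mathscr{M}^n(T)]$; rather $V_n$ is defined intrinsically in $\Delta$ by the recursion, and the induction above shows that \emph{every} such coordinate in $V_n$ is realized by some concrete mean in $\mathscr{M}^n(T)$. Once this is pinned down, the chain $V \subseteq W$ together with the density of $V$ in $\Delta$ closes the claim.
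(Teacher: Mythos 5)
Your proof is correct, but it takes a genuinely different route from the paper. You reduce the claim to Green--Gustin's Theorem \ref{thm:originalgreengustin} applied to the vertex set $\{e_1,\ldots,e_h\}$ of the simplex in $\mathbb{R}^h$ with $f=\mathrm{id}$, and then transfer the resulting dense set $V\subseteq\Delta$ into $W$ via the induction $V_n\subseteq W_n$; that induction is sound (the exchange of summation is exactly the right computation, and you correctly avoid the trap of non-unique barycentric coordinates by defining $V_n$ intrinsically by the recursion rather than as coordinates of points of $\mathscr{M}^n(T)$). The paper instead gives a self-contained, quantitative argument: it projects $\Delta$ onto $[0,1]^{h-1}$ and builds a nested sequence of partitions into parallelepipeds whose edges are split in ratio $\alpha:(1-\alpha)$ with $\alpha=\max_i\alpha_i$, checks that the vertices of the $n$th partition lying in $\pi(\Delta)$ belong to $\pi(W_n)$, and concludes from the cell diameters being $\le\alpha^n\sqrt{h-1}\to 0$. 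What your approach buys is brevity and a conceptually clean reduction; what it costs is that Theorem \ref{thm:originalgreengustin} must now be taken as an external black box from Green--Gustin's paper, whereas the paper presents that theorem as a \emph{consequence} of Theorem \ref{thm:main} (``we recover Green and Gustin's Theorem''), so importing it into the proof of Theorem \ref{thm:main} would make that recovery circular -- the paper's direct partition argument is, in effect, an independent proof of exactly the special case of Theorem \ref{thm:originalgreengustin} you invoke. Two trivial bookkeeping points: the paper defines $W_n$ only for $n\ge 1$, so your base case should land in $W_1$ (harmless, since the $\mathscr{M}^n(T)$ are increasing), and ``dense in $\Delta$'' is to be read in the relative topology of $\Delta\subseteq\mathbb{R}^h$, which is what Theorem \ref{thm:originalgreengustin} delivers for $\mathrm{conv}(\{e_1,\ldots,e_h\})=\Delta$.
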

\begin{proof}
For each $V\subseteq \mathbb{R}^h$, let $\pi(V)$ be the projection of $V$ on its first $h-1$ coordinates, that is, 
$$
\pi(V):=\{x\in \mathbb{R}^{h-1}: (x_1,\ldots,x_{h-1},y) \in V \text{ for some }y \in \mathbb{R}\}.
$$
Since the vectors $\delta$ in $\Delta$ satisfy the linear constraint $\sum_{i\le h} \delta_i=1$, it is sufficient to show that $\pi(W)$ is relatively dense in $\pi(\Delta)=\{\delta \in [0,1]^{h-1}: \sum_{i<h}\delta_i \le 1\}$. 

Define $\alpha:=\max\{\alpha_1,\ldots,\alpha_m\}$. 
Consider the cube $J_0:=[0,1]^{h-1}$ and let us construct recursively a sequence $(\mathscr{J}_n)_{n\ge 1}$ of increasingly finer partitions  of $J_0$ into $2^{n(h-1)}$ parallelepipeds as it follows. 
Set for convenience $\mathscr{J}_0:=\{J_0\}$ and suppose that $\mathscr{J}_{n-1}$ has been defined for some $n\ge 1$. 
At the $n$th step, let us divide each parallelepiped $J \in \mathscr{J}_{n-1}$ with $h-1$ hyperplanes with dimension $h-2$, each one perpendicular to an edge and dividing the edge itself in two segments with lenghts proportional to $\alpha$ and $1-\alpha$. 
Therefore we obtain a partition $\mathscr{J}_n$ of the original cube $J_0$ into $2^{n(h-1)}$ parallelepipeds of dimension $h-1$, each one with volume $\le \alpha^{n(h-1)}$ and diagonals of length $\le \alpha^n \sqrt{h-1}$. Let $V_n$ be the set of vertices of these parallelepipeds which belong also to $\pi(\Delta)$, and note that $V_n\subseteq \pi(W_n)$. Since $V:=\bigcup_n V_n$ is contained in $\pi(W)$, we obtain 
\begin{displaymath}
\begin{split}
\limsup_{n\to \infty} \max_{x \in \pi(W_n)} \min_{y \in \pi(W_n), x\neq y}\|x-y\|&
\le \limsup_{n\to \infty} \max_{J \in \mathscr{J}_n} \max_{x,y \in J, x\neq y}\|x-y\| \\
&\le \limsup_{n\to\infty}\alpha^n \sqrt{h-1}=0.
\end{split}
\end{displaymath}
Therefore $V$ (and, hence, $\pi(W)$) is relatively dense in $\pi(\Delta)$.
\end{proof}

At this point, let $g_T: \Delta \to \mathbb{R}^k$ be the  function defined by 
\begin{displaymath}
\forall \delta \in \Delta, \quad 
g_T(\delta):=\delta_1f(t_1)+\cdots \delta_hf(t_h). 
\end{displaymath}  
Since $g_T$ is continuous with image $g_T[\Delta]$, it follows by Claim \ref{claim:denseness} that $g_T[W]$ is relatively dense in $g_T[\Delta]$ (this easy fact can found, e.g., in the proof of \cite[Theorem 1.4.10]{Engelk}). In particular, $g_T[W]$ is relatively dense in $\mathrm{int}(\mathrm{conv}(f[T]))$. Taking into account that $f$ is a homeomorphism, we obtain that $f^{-1}[g_T[W]]$ is relatively dense in $f^{-1}[\mathrm{int}(\mathrm{conv}(f[T]))]$. Since $\mathscr{M}^\omega(T)$ contains $f^{-1}[g_T[W]]$, it follows that 
\begin{equation}\label{eq:conclusionT}
\mathscr{M}^\omega(T) \,\text{ is relatively dense in }\, \mathrm{cl}(f^{-1}[\mathrm{int}(\mathrm{conv}(f[T]))]).
\end{equation}

Considering that \eqref{eq:conclusionT} holds for all subsets $T\subseteq S$ with $2\le |T|\le 2k$, it follows by monotonicity that 
$$
\mathscr{M}^\omega(S) \,\text{ is relatively dense in }\, \bigcup_{T\subseteq S: |T|\le 2k}\mathrm{cl}(f^{-1}[\mathrm{int}(\mathrm{conv}(f[T]))]). 
$$
In view of Lemma \ref{lm:divisioncaratheodorynew2}, we get
$$
\mathscr{M}^\omega(S) \,\text{ is relatively dense in }\, f^{-1}[\mathrm{int}(\mathrm{conv}(f[S]))]. 
$$ 
Hence, by condition \ref{item:cond2} and the injectivity of $f$, we have
$$
\mathscr{M}^\omega(S) \,\text{ is relatively dense in }\, \mathrm{int}(\mathrm{conv}(S)). 
$$
The conclusion follows by Lemma \ref{lm:sufficiencyinterior} and the hypothesis that the convex hull of $S$ has nonempty interior.

\medskip

\ref{item:cond3} $\implies$ \ref{item:cond2}. 
Condition \ref{item:cond3} implies that
\begin{displaymath}
f\left[\mathrm{int}(\mathrm{conv}(S))\right]\subseteq f\left[\mathrm{conv}(S)\right] \subseteq \mathrm{conv}(f[S]).
\end{displaymath}
Moreover, $f$ is an open map by Lemma \ref{lem:brower}, hence $f\left[\mathrm{int}(\mathrm{conv}(S))\right]$ is contained in $\mathrm{int}(\mathrm{conv}(f[S]))$, i.e., condition \ref{item:cond2} holds.

\bigskip

Lastly, suppose that $S$ is compact.

\smallskip

\ref{item:cond2} $\implies$ \ref{item:cond3}. 
Since $S$ is $k$-dimensional, $f[S]$ is $k$-dimensional as well by condition \ref{item:cond2}. 
Moreover, by Lemma \ref{lem:brower}, $f$ is a homeomorphism onto its image. 
Since both continuity and convex hull operator preserve compacteness (see e.g. \cite[Theorem 3.1.10]{Engelk} and 
\cite[Theorem 3.20(d)]{Rudin}, respectively), then $\mathrm{conv}(S)$, $\mathrm{conv}(f[S])$, and $f^{-1}[\mathrm{conv}(f[S])]$ are compact sets. 
In addition, by condition \ref{item:cond2} and the monotonicity of the closure operator, we have 
\begin{displaymath}
\mathrm{cl}\left(\mathrm{int}\left(\mathrm{conv}(S)\right)\right) \subseteq \mathrm{cl}\left(f^{-1}\left[\mathrm{int}\left(\mathrm{conv}(f[S])\right)\right]\right).
\end{displaymath}
Since a $k$-dimensional closed convex set coincides with the closure of its interior by Lemma \ref{lem:intcl}, we obtain by Lemma \ref{lm:permutation} that
\begin{displaymath}
\begin{split}
\mathrm{conv}(S)=
\mathrm{cl}\left(\mathrm{int}\left(\mathrm{conv}(S)\right)\right)
&\subseteq \mathrm{cl}\left(f^{-1}\left[\mathrm{int}\left(\mathrm{conv}(f[S])\right)\right]\right) \\
&=\mathrm{cl}\left(\mathrm{int}\left(f^{-1}\left[\mathrm{conv}(f[S])\right]\right)\right)=
f^{-1}\left[\mathrm{conv}(f[S])\right],
\end{split}
\end{displaymath}
which is equivalent to condition \ref{item:cond3} by the injectivity of $f$.
\end{proof}

\medskip

\begin{rmk}
   As it is clear from the proof above, the implication \ref{item:cond3} $\implies$ \ref{item:cond2} holds independently of the compactness assumption on $S$.
\end{rmk}

\medskip

\begin{proof}[Proof of Corollary \ref{cor:greengustin}]
The image of an interval of a continuous real-valued function of a real variable is also an interval. 
Moreover, every interval is a convex set. 
Since $\mathrm{conv}(S)$ is an interval of $\mathbb{R}$ and the functions $f_i$ are continuous injections, we obtain $f_i[\mathrm{conv}(S)]=\mathrm{conv}(f_i[S])$ for each $i \in \{1,\ldots,k\}$. 
However, each inverse function $f_i^{-1}$ is a continuous injection, hence $f_i[\mathrm{int}(\mathrm{conv}(S))]$ is an open subsets of $f_i[\mathrm{conv}(S)]$, cf. Lemma \ref{lem:brower}. 
Therefore 
\begin{displaymath}
f_i[\mathrm{int}(\mathrm{conv}(S))] \subseteq \mathrm{int}\left(f_i[\mathrm{conv}(S)]\right)=\mathrm{int}\left(\mathrm{conv}(f_i[S])\right)
\end{displaymath}
for each $i\in \{1,\ldots,k\}$. By \cite[Proposition 1.4.1]{Engelk}, the map $f_i^{-1}$ is continuous if and only if $f_i[\mathrm{int}(V)]\subseteq \mathrm{int}(f_i[V])$ for all $V\subseteq I$. Therefore, we obtain that
\begin{displaymath}
\begin{split}
f[\mathrm{int}(&\mathrm{conv}(S^k))]
=f\left[\mathrm{int}(\mathrm{conv}(S))^k\right]
=\prod_{i=1}^kf_i\left[\mathrm{int}(\mathrm{conv}(S))\right] \\
&\subseteq \prod_{i=1}^k\mathrm{int}\left(f_i\left[\mathrm{conv}(S)\right]\right)
=\mathrm{int}\left(f[(\mathrm{conv}(S))^k]\right)
=\mathrm{int}\left(f[\mathrm{conv}(S^k)]\right).
\end{split}
\end{displaymath}
The conclusion follows from Theorem \ref{thm:main}. 
\end{proof}

\section{Acknowledgements} 
I am grateful to Fabio Maccheroni, Massimo Marinacci, Simone Cerreia--Vioglio (Universit\'{a} Bocconi), and Giulio Principi (New York University) for useful comments. I also thank an anonynous referee for a careful reading of the manuscript. 

\section{Declarations}
The author confirms that no data have been used for the preparation of the manuscript. In addition, he declares that there is no conflict of interest and that no fundings were received. 

\bibliographystyle{amsplain}

\end{document}